\newtheorem*{remark}{\bf Remark}
\newtheorem{theorem}{\bf Theorem}
\newtheorem{lemma}[theorem]{\bf Lemma}
\def\C{{\mathbb C}}
\def\N{{\mathbb N}}
\def\R{{\mathbb R}}
\def\D{\mathbb{D}}
\def\p{\mathbb{P}}
\def\and{{\quad\text{and}\quad}}
\title{The Geometric Dynamical Northcott Property in the quadratic family}
\author{Thomas Gauthier}
\address{Laboratoire de Math\'ematiques d'Orsay, B\^atiment 307, Universit\'e Paris-Saclay, 91405 Orsay Cedex, France}
\email{thomas.gauthier1@universite-paris-saclay.fr}
\author{Gabriel Vigny}
\address{LAMFA, Universit\'e de Picardie Jules Verne, 33 rue Saint-Leu, 80039 AMIENS Cedex 1, FRANCE}
\email{gabriel.vigny@u-picardie.fr}
\thanks{The first author is partially supported by the Institut Universitaire de France.}
\thanks{The second author is partially supported by the ANR QuaSiDy /ANR-21-CE40-0016.}
\begin{document}
\maketitle

\begin{abstract}
The aim of this note is to give a proof of Theorem A from \cite{GV_Northcott} in the simpler case of the quadratic family; being in dimension $1$ in both the dynamical space and the parameter space, and having a simple and explicit parametrization of the family allow to simplify the proof and, we hope, make the ideas more apparent.
\end{abstract}

Consider the quadratic family 
\[\begin{cases}
f:&\C\times \C \to \C \times \C \\
& \, \ (z,\lambda) \mapsto (z^2+\lambda, \lambda).
\end{cases} \]
For each $\lambda$, $f_\lambda(z):=z^2+\lambda $ defines a polynomial map on $\C$ whose \emph{filled Julia set} $K_\lambda$ is 
\[ K_\lambda := \{z\in \C, \ \limsup |f^n_\lambda(z)| <\infty \}.\]
It is a (perfect) compact set in $\C$ and one can similarly consider
\[ \mathcal{K}:= \{(z,\lambda)\in \C^2, \ \limsup |f^n_\lambda(z)| <\infty \}=\bigcup_{\lambda\in\C} K_\lambda \times \{\lambda\}.\]
In this situation, the Geometric Dynamical Northcott Property can be rephrased as 
\begin{theorem}[\cite{benedetto, Baker-functionfield, demarco}]\label{quadratic_case}
The only algebraic curves of $\C^2$ contained in $\mathcal{K}$ are preperiodic. 
\end{theorem}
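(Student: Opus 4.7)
My plan is to reduce the statement to a marked-point rigidity result and then invoke a normal-family / bifurcation dichotomy. Let $C\subset\mathcal{K}$ be an irreducible algebraic curve, and write $\pi:\C^2\to\C$ for the projection to the parameter $\lambda$.

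The first step is to dispose of the vertical case. If $\pi(C)=\{\lambda_0\}$, then $C$ lies in the line $\C\times\{\lambda_0\}$, and being an irreducible algebraic curve there, it must equal $\C\times\{\lambda_0\}$; but then $C\subset\mathcal{K}$ forces $K_{\lambda_0}=\C$, contradicting compactness of $K_{\lambda_0}$. Hence $\pi|_C$ is dominant. Passing to the normalization of $C$ and removing any points lying over $\lambda=\infty$, I obtain a smooth affine curve $T$ with regular functions $z,\lambda:T\to\C$ parametrizing $C$, with $\lambda$ non-constant, and the assumption $C\subset\mathcal{K}$ becomes $z(t)\in K_{\lambda(t)}$ for every $t\in T$.

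Using the standard escape bound $K_\lambda\subset\{|w|\le\max(2,|\lambda|)\}$, the iterates $g_n(t):=f_{\lambda(t)}^n(z(t))$ remain in $K_{\lambda(t)}$ and are therefore uniformly bounded on every compact subset of $T$; by Montel, the family $\{g_n\}_{n\ge 0}$ is normal on $T$. Equivalently, writing $G(w,\lambda):=\lim_n 2^{-n}\log^+|f_\lambda^n(w)|$ for the continuous plurisubharmonic Green function, which vanishes exactly on $\mathcal{K}$ and satisfies $G\circ f=2G$, the subharmonic function $u(t):=G(z(t),\lambda(t))$ vanishes identically on $T$, so the \emph{activity current} $dd^cu$ of the marked point $z$ on $T$ is zero.

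The central step, which I expect to be the main obstacle, is to upgrade this dynamical passivity into \emph{algebraic} preperiodicity: one must show that $g_n=g_m$ as holomorphic functions on $T$ for some $n>m\ge 0$. The cleanest route is the McMullen/DeMarco dichotomy for marked points in non-isotrivial holomorphic families of polynomials: a passive marked point is either preperiodic or escapes to infinity, the latter being ruled out here by $z(t)\in K_{\lambda(t)}$. In the quadratic setting, one can also proceed more concretely by analyzing the $g_n$ near the finitely many punctures of $T$: the uniform bound $|g_n(t)|\lesssim|\lambda(t)|$ places the $g_n$ in a bounded-degree family of algebraic functions on $T$, and combined with the explicit recursion $g_{n+1}=g_n^2+\lambda$ this eventually forces a collision. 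Once $g_n=g_m$ is established on $T$, it pulls back to the equality $f^n(C)=f^m(C)$ of algebraic subvarieties of $\C^2$, which is precisely the definition of $C$ being preperiodic.
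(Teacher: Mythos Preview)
Your initial reductions are correct, and your second route---bounding $|g_n|$ by $|\lambda|$ near the punctures of $T$ to conclude that the $g_n$ (equivalently, the curves $f^n(C)$) have bounded degree---is essentially the content of the paper's first two lemmas, which reach $\deg(f^n(C))=O(1)$ via the Green current $T_f$ instead. The McMullen/DeMarco dichotomy you propose as the ``cleanest route'', however, is circular here: DeMarco's paper is one of the three references to which the theorem itself is attributed, so quoting the dichotomy amounts to citing the result rather than proving it.

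The real gap is the phrase ``this eventually forces a collision''. Bounded degree only places the orbit $\{f^n(C)\}_{n\ge0}$ inside a positive-dimensional algebraic family $\mathcal{A}_D$ of curves, so no finiteness or pigeonhole argument is available, and the recursion $g_{n+1}=g_n^2+\lambda$ by itself does not produce one. This implication is precisely the paper's Lemma~\ref{lemma4}, and it is where almost all the work lies: one takes the Zariski closure $\mathcal{Z}$ of the orbit inside $\mathcal{A}_D$, notes that $f$ induces a self-map of $\mathcal{Z}$, and must then rule out $\dim\mathcal{Z}\ge1$. The paper does this via a transversality statement (Lemma~\ref{transversality_quadratic}): any irreducible $Z_0\in\mathcal{Z}$ passing through a repelling periodic point $(z_0,\lambda_0)$ must coincide with the global periodic-point curve $\{f_\lambda^k(z)=z\}$ (the argument is a local normal-family estimate very close in spirit to yours), and from this one manufactures an impossible holomorphic conjugacy between $f_{\lambda_0}$ and $f_{\lambda_1}$ for nearby $\lambda_0\ne\lambda_1$. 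None of this mechanism appears in your sketch, so the central step you yourself flag as the main obstacle remains unresolved.
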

We gave a generalization of this result for families of polarized endomorphisms over a projective variety in \cite{GV_Northcott} (see also \cite{Chatzidakis-Hrushovski}) though the proof is quite involved due to the necessity to deal with singular projective varieties, to take normalizations of several such projective varieties, and to deal with possible isotrivial subvarieties \dots  The purpose of this note is to give the proof in the simplest possible case for a reader who would like to have a good understanding of the main ideas used in \cite{GV_Northcott}. 

\medskip

The \emph{Green current} $T_f$ of $f$ is defined as $T_f:=\lim_{n\to+\infty} 2^{-n} (f^n)^* (\pi_1^*(\omega))$ where $\pi_i:\C\times \C \to \C$ is the projection on the $i$-th factor, and $\omega$ is the Fubini-Study form of $\p^1$. The current $T_f$ is a positive closed current with continuous potential $G:= \lim 2^{-n} \log\max (|f^n(z,\lambda)|, 1)$ (i.e.\ $dd^c G =T_f$ in $\C^2$) and its slice with any vertical line $\{\lambda\}\times \C$ is the \emph{Green measure $\mu_\lambda$} of $f_\lambda$: $\mu_\lambda$ is the unique ergodic measure of maximal entropy $\log 2$, and it satisfies the equidistribution property
\[\lim_{n\to\infty} \frac{1}{2^n} \sum_{z, \, f_\lambda^n(z)=z} \delta_z = \mu_{\lambda}\]
which is first due to \cite{brolin} in that case. Moreover, we have $\mathcal{K}=\{G=0\}$.
\begin{lemma}
	Let $Z \subset \C^2$ be an algebraic curve. If $Z \subset \mathcal{K}$ then $T_f \wedge [Z] =0$.
\end{lemma}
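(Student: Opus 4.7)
The plan is to exploit the fact that $T_f = dd^c G$ admits the \emph{continuous} plurisubharmonic potential $G$, together with the identity $\mathcal{K} = \{G = 0\}$, to reduce the intersection $T_f \wedge [Z]$ to (the pushforward of) the Laplacian of the restriction $G|_Z$, which will vanish trivially.

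First I would recall that since $G$ is continuous and plurisubharmonic, the Bedford--Taylor wedge product $T_f \wedge [Z] = dd^c G \wedge [Z]$ is well-defined as a positive measure supported on $Z$; in particular no mass can sit on the finite singular locus of $Z$, so it suffices to compute the measure on the smooth part (or, equivalently, on a normalization). Next I would introduce the normalization $\nu : \tilde Z \to Z$, where $\tilde Z$ is a (possibly non-connected) smooth Riemann surface; the pullback $G \circ \nu$ is then continuous and subharmonic on $\tilde Z$. The standard compatibility between restriction of psh functions to analytic curves and the $dd^c$ operator yields the identity
\[
T_f \wedge [Z] \;=\; \nu_* \bigl( dd^c (G \circ \nu) \bigr)
\]
as positive measures on $\C^2$.

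The conclusion is then immediate: the hypothesis $Z \subset \mathcal{K}$ combined with $\mathcal{K} = \{G=0\}$ and the continuity of $G$ gives $G|_Z \equiv 0$, so $G \circ \nu \equiv 0$ on $\tilde Z$, hence $dd^c(G\circ\nu) = 0$, and therefore $T_f \wedge [Z] = 0$.

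The main (rather mild) obstacle is justifying the restriction formula through the normalization when $Z$ is singular or reducible. One way to proceed is to work locally at a smooth point of $Z$, write $Z$ as a graph, and use that $dd^c G \wedge [Z]$ equals the Laplacian of $G$ restricted to that graph; since the singular set of $Z$ is finite and $T_f \wedge [Z]$ puts no mass on points (again thanks to the continuity of $G$), this local identification determines the full measure. Decomposing $Z$ into its irreducible components reduces everything to the irreducible case, which is covered by the normalization argument above.
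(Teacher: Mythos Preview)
Your argument is correct and rests on the same key observation as the paper: since $G$ is a continuous psh potential for $T_f$ and $Z\subset\mathcal{K}=\{G=0\}$, the restriction $G|_Z$ vanishes identically, forcing $T_f\wedge[Z]=0$. The paper's proof is even terser than yours: it simply invokes the Bedford--Taylor definition $T_f\wedge[Z]:=dd^c(G\cdot[Z])$, notes that the current $G\cdot[Z]$ is zero because $G|_Z\equiv0$, and concludes; there is no need to pass through a normalization or worry about singular points, since the vanishing of $G\cdot[Z]$ is immediate from continuity of $G$.
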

\begin{proof}
	As $G\equiv0$ on $Z$, we have $G\cdot[Z]=0$ in the sense of currents, so that we find $T_f\wedge [Z]=dd^c(G\cdot[Z])= 0$.
\end{proof}
In the following, $\deg(Z)$ is computed with respect to the ample line bundle $\pi_1^*(\mathcal{O}(1))\oplus \pi_2^*(\mathcal{O}(1))$ (in other words, we count the sum of the number of intersections of $Z$ with a generic vertical line and a generic horizontal line).
\begin{lemma}
	Let $Z \subset \C^2$ be an algebraic curve with no vertical component, then $T_f \wedge [Z] =0$ if and only if $\deg(f^n(Z))=O(1)$.
\end{lemma}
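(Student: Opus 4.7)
The plan is to analyze the integral
\[ a_n := \int_Z (f^n)^*\pi_1^*\omega \]
in two complementary ways. First, let $Y_n := f^n(Z)$ and $d_n$ be the generic degree of $f^n|_Z : Z \to Y_n$ (which is finite since $Z$ has no vertical component). Writing $k_i(Y) := \int_Y \pi_i^*\omega$ for an algebraic curve $Y$, so that $\deg(Y) = k_1(Y) + k_2(Y)$, the projection formula yields $a_n = d_n \cdot k_2(Y_n)$. Applying the same identity to $\pi_2^*\omega$ and using $\pi_2 \circ f = \pi_2$ gives $k_1(Z) = d_n \cdot k_1(Y_n)$, so $d_n \le k_1(Z)$ is uniformly bounded. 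Since $T_f\wedge[Z]$ is a positive measure of total mass $\lim_n 2^{-n} a_n$, this already gives the easy direction: if $\deg(Y_n) = O(1)$, then $k_2(Y_n)=O(1)$, hence $a_n \le k_1(Z)\cdot k_2(Y_n) = O(1)$ and $2^{-n}a_n \to 0$, forcing $T_f\wedge [Z] = 0$.

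For the converse I would compute $a_n$ geometrically on the normalization $\tilde Z$ of the compactification $\bar Z \subset \p^1 \times \p^1$. Let $P \subset \tilde Z$ be the finite set of points lying over $\bar Z \setminus Z$. The morphism $(\pi_1 \circ f^n)|_{\tilde Z}:\tilde Z \to \p^1$ has poles only in $P$, so
\[ a_n = \sum_{p \in P} M_p^{(n)}, \]
where $M_p^{(n)}$ denotes the pole order at $t=0$ of $f^n_{\lambda(t)}(z(t))$, $t$ being a local parameter at $p$ and $(z(t),\lambda(t))$ the corresponding branch, with $z \sim \alpha\, t^{-m_p}$ and $\lambda \sim \beta\, t^{-l_p}$ for some $m_p,l_p\ge 0$. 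The associated coefficient $c_p := \lim_n 2^{-n} M_p^{(n)}$ is non-negative, and $T_f \wedge [Z]=0$ translates to $\sum_p c_p = 0$, so $c_p = 0$ for every $p \in P$. A direct local computation gives $c_p = m_p$ at points over $(\infty,\lambda_0)$ with $\lambda_0 \in \C$ and $c_p = l_p/2$ at points over $(z_0,\infty)$ with $z_0 \in \C$; both are strictly positive as soon as $\bar Z$ meets the corresponding infinity divisor. Hence $\bar Z \cap$ (infinity) is forced to lie in $\{(\infty,\infty)\}$.

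The heart of the proof, which I expect to be the main obstacle, is to upgrade the vanishing $c_p = 0$ (i.e.\ $M_p^{(n)} = o(2^n)$) to the much stronger statement that $M_p^{(n)}$ is uniformly bounded in $n$. The plan is to exploit the dynamical recursion
\[ \phi_{n+1}(t) = \phi_n(t)^2 + \lambda(t), \qquad \phi_n(t) := f^n_{\lambda(t)}(z(t)), \]
and to compare orders at $t=0$: if $M_p^{(n)} > l_p/2$ then $\mathrm{ord}_0(\phi_n^2) = 2M_p^{(n)}$ strictly dominates $\mathrm{ord}_0(\lambda)=l_p$, so no cancellation is possible and $M_p^{(n+1)} = 2M_p^{(n)}$; while $M_p^{(n)} < l_p/2$ forces $M_p^{(n+1)} = l_p > l_p/2$ and the same doubling at the following step. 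Consequently, $c_p = 0$ forces $M_p^{(n)} = l_p/2$ for all large $n$ (with cancellation at each step bringing the pole of $\phi_n^2+\lambda$ from $l_p$ down to exactly $l_p/2$), so $M_p^{(n)}$ is bounded uniformly. Summing yields $a_n = O(1)$, whence $k_2(Y_n) = a_n/d_n = O(1)$ and, combined with $k_1(Y_n)\le k_1(Z)$, we obtain $\deg(Y_n) = O(1)$, as required.
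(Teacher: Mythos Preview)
Your argument is correct and takes a genuinely different route from the paper. The paper proves the quantitative global estimate
\[
\left|\deg f^n(Z)-2^n\int_{\C^2}[Z]\wedge T_f\right|\leq C\deg(Z)
\]
directly, using a cut-off function $\phi_A$ on the $\lambda$-line together with the uniform bound $|G-2^{-n}\log^+|f^n_\lambda(z)||\leq 2^{1-n}(\log^+|\lambda|+\log 2)$ and Stokes' theorem; both implications then drop out at once. You instead pass to the normalization of $\bar Z$ and run a purely valuation-theoretic argument on the pole orders $M_p^{(n)}$, exploiting the trichotomy in the recursion $\phi_{n+1}=\phi_n^2+\lambda$ to force $M_p^{(n)}\equiv l_p/2$ whenever the local contribution $c_p$ vanishes. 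This is essentially the Call--Silverman local canonical height decomposition carried out by hand, and it is more elementary (no currents beyond the definition, no cut-off tricks); the paper's approach, on the other hand, gives an effective constant and is the one that generalizes to the higher-dimensional setting of \cite{GV_Northcott}, which is the purpose of this note.

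Two small points. First, you have swapped the roles of $k_1$ and $k_2$ throughout (in the paper's convention $\pi_1$ is the $z$-projection, so $a_n=d_n\,k_1(Y_n)$ and $\pi_2\circ f=\pi_2$ gives $k_2(Z)=d_n\,k_2(Y_n)$); this is harmless. Second, the identity ``total mass of $T_f\wedge[Z]$ equals $\lim_n 2^{-n}a_n=\sum_p c_p$'' is what links your local computation to the hypothesis $T_f\wedge[Z]=0$, and it deserves a word of justification: one checks (using $G\circ f=2G$ and the elementary bound $|G(w,\lambda)-\log^+|w||\leq\log^+|\lambda|+\log 2$) that $G|_{\tilde Z}$ has a logarithmic pole of exact coefficient $c_p$ at each $p\in P$, whence Stokes on the compact curve $\tilde Z$ gives $\int_Z T_f\wedge[Z]=\sum_p c_p$. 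Once this is said, your dichotomy on pole orders finishes the proof cleanly.
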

\begin{proof}
	Recall that $G$ is defined by $\lim_n 2^{-n}\log^+|f^n_\lambda(z)|$.  From the inequality $\log^+|a+b|\leq \log^+|a|+\log^+|b|+\log 2$, we easily have
	\[ |\log^+|f_\lambda(z)| -2 \log^+|z|| \leq \log^+|\lambda|+\log 2. \]
	Hence, $| 2^{-k} \log^+|f^k_\lambda(z)| -2^{-k+1} \log^+|f_\lambda^{k-1}(z)| \leq 2^{-k}(\log^+|\lambda|+\log 2)$ and summing from $n+1$ to $\infty$, we deduce that
	\begin{align}\label{estimee_quadratique}
	\forall \lambda,z, \ \left| G(z,\lambda) - 2^{-n}\log^+| f^n_\lambda(z)|\right| &\leq \sum_{n+1}^\infty   | 2^{-k} \log^+|f^k_\lambda(z)| -2^{-k+1} \log^+|f_\lambda^{k-1}(z)| \nonumber \\
	&\leq \frac{\log^+|\lambda|+\log 2}{2^{n-1}}. 
	\end{align}
	Let $A\gg 1$ and consider the cut-off function on $\C$
	\[\phi_A:= \frac{ \log \max (|\lambda|,e^{2A})- \log \max (|\lambda|,e^{A})}{A}.\]
	Then, $\phi_A$ is equal to $1$ on the disk $\D(e^A)$ centered at $0$ of radius $e^A$, is zero outside $\D(e^{2A})$ and $dd^c \phi_A = \frac{\lambda_{S_{2A}}- \lambda_{S_{A}}}{A}$ where $\lambda_{S_A}$ (resp. $\lambda_{S_{2A}}$) is the unit (of mass $1$) Lebesgue measure  on the circle of radius $e^A$ (resp. radius $e^{2A}$).
	
	Take an algebraic curve $Z \subset \C^2$ with no vertical components.	We compute, using the fact that $f$ acts trivially on the variable $\lambda$ 
	\begin{align*}
	\int_{\C^2}  \phi_A f^n_*([Z]) \wedge (\pi_2^*(\omega)+\pi_1^*(\omega))&=\int_{\C^2}  \phi_A [Z] \wedge (\pi_2^*(\omega)+ (f^n)^*(\pi_1^*(\omega))) \\
	&= \int_{\C^2}  \phi_A [Z] \wedge \pi_2^*(\omega) + \int_{\C^2}  \phi_A [Z] \wedge 2^nT_f\\
	& \quad + 2^n \int_{\C^2}  \phi_A [Z] \wedge (2^{-n} (f^n)^*(\pi_1^*(\omega)) - T_f). 
	\end{align*}
Now, since we have $2^{-n} (f^n)^*(\pi_1^*(\omega)) - T_f= dd^c( 2^{-n}\log^+|f_\lambda(z)|-G)$, by Stokes, \eqref{estimee_quadratique} and Bézout's theorem, we find
	\begin{align*}
	I_n: &=    \left|2^n \int_{\C^2}  \phi_A [Z] \wedge (2^{-n} (f^n)^*(\pi_1^*(\omega)) - T_f) \right|\\
	&= \left|2^n \int_{\C^2} (2^{-n}\log^+|f_\lambda(z)|-G)  dd^c \phi_A \wedge [Z] \right| \\
	&\leq 2 \int_{\C^2}  (\log^+|\lambda|+\log 2) \frac{\pi_1^*(\lambda_A)}{A} \wedge [Z] +
	 2\int_{\C^2} (\log^+|\lambda|+\log 2)  \frac{\pi_1^*(\lambda_{2A})}{A} \wedge [Z]  \\
	&\leq  2\int_{\C^2} (A+\log2) \frac{\pi_1^*(\lambda_{S_A})}{A} \wedge [Z] +2\int_{\C^2} (2A+\log 2) \frac{\pi_1^*(\lambda_{S_{2A}})}{A} \wedge [Z] \\
	&  \leq C \deg(Z)
	\end{align*}
	where $C$ is a constant that depends neither on $A$, nor on $Z$, nor on $n$. So, up to taking a larger $C$, and by letting $A\to \infty$, the degree $\deg f^n(Z)$
	\[ \deg f^n(Z):=	\int_{\C^2} f^n_*([Z]) \wedge (\pi_1^*(\omega)+\pi_2^*(\omega)) .\]
satisfies
	\[ \left|\deg f^n(Z)-  2^n\int_{\C^2} [Z]\wedge T_f \right|\leq C \deg(Z)\]
	which ends the proof.
\end{proof}

\begin{remark} \normalfont
We may see the family $f$ as a dynamical system $\mathsf{f}$ on $\p^1(\mathbf{K})$, where $\mathbf{K}$ is the field of complex rational functions. Any point $\mathsf{z}\in\p^1(\mathbf{K})$ corresponds to a rational function $z:\p^1\to \p^1$ and the dynamical height function $\widehat{h}_\mathsf{f}:\p^1(\overline{\mathbf{K}})\to\R_+$
is defined on $\p^1(\mathbf{K})$ by
\[\widehat{h}_\mathsf{f}(\mathsf{z}):=\lim_{n\to\infty}\frac{1}{2^n}\deg(\mathsf{z}_n),\]
where $z_n:\p^1\to\p^1$ corresponds to $\mathsf{z}_n$ and is defined by $z_n(\lambda):=f_{\lambda}^n(z(\lambda))$.
 In particular, the lemma gives
\[\widehat{h}_\mathsf{f}(\mathsf{z})=\int_{\C^2}T_f\wedge [Z],\]
when $Z$ is the graph of $\mathsf{z}\in \p^1(\mathbf{K})$. In other words, we proved that
\[\left|\deg(f^n(Z))-2^n \widehat{h}_f(\mathsf{z})\right|\leq C\deg(Z).\]
\end{remark}

\begin{lemma}\label{lemma4}
	Let $Z \subset \C^2$ be an algebraic curve with no vertical components, if $\deg(f^n(Z))=O(1)$ then $Z$ is preperiodic.
\end{lemma}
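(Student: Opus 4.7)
The strategy is to transfer the question from dynamics of curves in $\C^2$ to dynamics of rational functions on the normalization of $Z$. Let $\nu:\bar X\to\bar Z\subset\p^1\times\p^1$ be the normalization of the projective closure of $Z$, and write $\nu=(u,v)$ with $u,v\in\C(\bar X)$; since $Z$ has no vertical component, $v$ is non-constant. Setting $U_n:=f_v^n(u)\in\C(\bar X)$, the pair $(U_n,v)$ parametrizes $f^n(Z)$, and the hypothesis $\deg(f^n(Z))=O(1)$ translates into $\deg U_n=O(1)$ as a rational function on $\bar X$.

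My first step is to locate the poles of $U_n$. At any point $p\in\bar X$ with $v(p)\in\C$, the recursion $U_{n+1}=U_n^2+v$ preserves regularity, and $U_0=u$ is regular at $p$ since $Z\subset\C^2$; hence every pole of every $U_n$ lies in $v^{-1}(\infty)$. At each such $p$ I would then perform a Laurent-expansion analysis in a local parameter $t$: writing $v=\alpha t^{-\ell}+\cdots$ with $\ell=-\mathrm{ord}_p(v)>0$ and $U_n=\beta_n t^{-\mu_n}+\cdots$, the recursion forces $\mu_{n+1}=\max(2\mu_n,\ell)$ \emph{unless} $2\mu_n=\ell$ and $\beta_n^2=-\alpha$, in which case the top Laurent terms of $U_n^2$ and $v$ cancel. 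Boundedness of $\mu_n$ rules out any genuine growth, so this cancellation must occur at every iterate: $\ell$ is even, $\mu_n\equiv\ell/2$, and $\beta_n\in\{\pm\sqrt{-\alpha}\}$. Propagating the same cancellation requirement through the sub-leading orders rigidly determines the first $\ell/2$ Laurent coefficients of each $U_n$ from the Laurent data of $v$ together with a finite sequence of sign choices.

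Globalising, $\{U_n\}_{n\geq 0}$ sits inside a finite union of affine subspaces of the finite-dimensional vector space $H^0(\bar X,\mathcal{O}(D))$, where $D=\tfrac{1}{2}(v)_\infty$. The last step is to upgrade this containment to genuine finiteness of the orbit $\{U_n\}$, which will give $U_n=U_m$ for some $n<m$, i.e.\ $f^n(Z)=f^m(Z)$ and $Z$ preperiodic. I expect this to be the main obstacle: morally it is the non-isotrivial function-field Northcott property for the family $z^2+\lambda$ over $\C(\lambda)$, applied to the algebraic point $\mathsf z\in\p^1(\overline{\mathbf K})$ represented by $Z$, which satisfies $\widehat h_\mathsf{f}(\mathsf z)=0$ by the remark. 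The concrete plan is to push the Laurent-coefficient rigidity through enough further orders so that no positive-dimensional piece of the constraint locus is stable under $U\mapsto U^2+v$, thereby ruling out an infinite orbit.
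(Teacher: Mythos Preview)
Your Laurent analysis at the poles of $v$ is essentially correct, with one fixable slip: the claim that $u$ is regular at every $p$ with $v(p)\in\C$ is false as stated (the closure $\bar Z\subset\p^1\times\p^1$ may well meet $\{\infty\}\times\C$), but the conclusion survives because a pole of $u$ away from $v^{-1}(\infty)$ would force $\deg U_n\geq 2^n$, contradicting the hypothesis. You then correctly deduce that $\mu_n\equiv \ell/2$ at each pole of $v$ and that the orbit $\{U_n\}$ sits in a finite union of affine subspaces of the finite-dimensional space $H^0(\bar X,\mathcal{O}(D))$.

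The genuine gap is the last step, which you yourself flag as the main obstacle --- and it is not a residual technicality but the entire content of the lemma. Passing from ``orbit confined to a finite-dimensional constructible set'' to ``finite orbit'' is exactly where non-isotriviality of the family must enter, and your plan to push Laurent rigidity through further orders has no evident termination: for $g(\bar X)=0$, Riemann--Roch shows your affine pieces have dimension at least one, and the local expansion at $v^{-1}(\infty)$ contains no information distinguishing the quadratic family from an isotrivial one (where infinite bounded orbits do occur). Invoking function-field Northcott here is circular, since that is precisely the statement being proved. The paper arrives at the same dichotomy --- it parametrizes degree-$\leq D$ curves by a variety $\mathcal{A}_D$, shows $Z\mapsto f(Z)$ is a morphism, and takes the Zariski closure $\mathcal{Z}$ of the orbit --- but then resolves the positive-dimensional case by a dynamical argument absent from your outline: a transversality lemma (Lemma~\ref{transversality_quadratic}), combining normality of $\lambda\mapsto f_\lambda^{kn}(z(\lambda))$ on a curve in $\mathcal K$ with repulsion, forces every irreducible curve in $\mathcal{Z}$ through a repelling periodic point $(z_0,\lambda_0)$ to coincide with the periodic curve $\{f_\lambda^k(z)=z\}$. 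This makes $\mathcal{Z}$ one-dimensional and foliation-like, and tracking leaves between nearby fibres manufactures a holomorphic conjugacy $f_{\lambda_0}\sim f_{\lambda_1}$, impossible for $\lambda_0\neq\lambda_1$. Without an argument of this kind, your proof is incomplete.
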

\begin{proof}
	Let $\mathcal{A}_D$ denote the set of algebraic sets $Z$ of degree $\leq D$ where, from now on, we compute the degree in $\p^2$ (having bounded degree in $\p^2$ or in $\p^1\times \p^1$ is equivalent). Such $Z$ is defined by some equation:
	\[\sum_{i+j\leq D} a_{i,j} \lambda^i z^j =0 \]
	where the $(a_{i,j})\in \p^{\frac{(D+1)(D+2)}{2}}(\C)$. So $\mathcal{A}_D$ is an algebraic variety (notice that $a_{i,j}=0$ for $(i,j)\neq(0,0)$ corresponds to the line at infinity).  Usually, computing the direct image of an analytic set cannot be done explicitly, but here it is possible:
	write $\sum_{i+j\leq D} a_{i,j} \lambda^i z^j =\sum_{i+2j\leq D} a'_{i,j} \lambda^i (z^2+\lambda)^j + z \sum_{i+2j\leq D-1} a''_{i,j} \lambda^i (z^2+\lambda)^j $ for some suitable $a'_{i,j}$, $a''_{i,j}$ depending linearly on the $(a_{i,j})$. Then,
	\begin{align*} \sum_{i+j\leq D} a_{i,j} \lambda^i z^j =0 \ &\iff \ \sum_{i+2j\leq D} a'_{i,j} \lambda^i (z^2+\lambda)^j =- z \sum_{i+2j\leq D-1} a''_{i,j} \lambda^i (z^2+\lambda)^j  \\ 
	& \iff  \sum_{i+2j\leq D} a'_{i,j} \lambda^i f_\lambda(z)^j =- z \sum_{i+2j\leq D-1} a''_{i,j} \lambda^i f_\lambda(z).
	\end{align*}
	Take the square (this does not add points because the set $-Z:=\{( z, \lambda),(-z,\lambda)\in Z\} $ has the same image than $Z$) and compute:
	\begin{align*}  \left(\sum_{i+2j\leq D} a'_{i,j} \lambda^i f_\lambda(z)^j\right)^2 =  & (z^2+\lambda) \left(\sum_{i+2j\leq D-1} a''_{i,j} \lambda^i (f_\lambda(z))^j\right)^2\\
	& \quad - \lambda\left(\sum_{i+2j\leq D-1} a''_{i,j} \lambda^i (f_\lambda(z))^j\right)^2.
	\end{align*}
	So we recognize that $f(Z)$ is given by the equation
	\[ \left(\sum_{i+2j\leq D} a'_{i,j} \lambda^i z^j\right)^2 = z \left(\sum_{i+2j\leq D-1} a''_{i,j} \lambda^i z^j\right)^2- \lambda\left(\sum_{i+2j\leq D-1} a''_{i,j} \lambda^i z^j\right)^2.   \]
In particular, the application that sends $Z$ to $f(Z)$ is a morphism (a priori from $\mathcal{A}_D$ to $\mathcal{A}_{dD}$) and the condition that $\deg(f(Z)) \leq D$ is an algebraic condition. Intersecting, 
	\[ \{Z\in \mathcal{A}_D, \ \ \forall n, \  f^n(Z)\in \mathcal{A}_D\}   \]
	is a subvariety of $\mathcal{A}_D$.
	
	We now start with a horizontal irreducible algebraic curve $Z \in \mathcal{A}_D$ with $\deg(f^n(Z))\leq D$ for all $n$ and consider the Zariski closure $\mathcal{Z}$ of $\{ f^n(Z), \ n\in \N\} $ in $ \mathcal{A}_D$. Observe that $f$ induces an action $\mathfrak{f}: \mathcal{Z} \to \mathcal{Z}$. In particular, there is an irreducible component $\mathcal{Z}_1$ of $\mathcal{Z}$ with $f^k(\mathcal{Z}_1)=\mathcal{Z}_1$ for some $k$. Without loss of generality, we may assume $k=1$ and $\mathcal{Z}_1=\mathcal{Z}$ in the rest of the proof. Furthermore, a generic element of $\mathcal{Z}$ is irreducible by construction. If $\mathcal{Z}$ has dimension $0$, then it is finite and $Z$ is preperiodic so we are done. 
	
\medskip

	Assume by contradiction that $\dim(\mathcal{Z})\geq 1$.
	Consider the set
	\[\widehat{\mathcal{Z}}:=\{(Z,z,\lambda) \in \mathcal{Z} \times \C^2, \,  (z,\lambda)\in Z \}. \]
	Then, $\widehat{\mathcal{Z}}$ is a subvariety of $\mathcal{A}_D\times \C^2$ and our hypothesis implies that the canonical projection $\Pi: \widehat{\mathcal{Z}} \to \C^2$ onto the second factor, i.e. defined by $\Pi(Z,z,\lambda)=(z,\lambda)$ is dominant (if not, its image is a strict algebraic subvariety of $\C^2$ which would contradict our assumption that $\dim(\mathcal{Z})\geq 1$). 

	In particular, there is a non-empty Zariski open set $W\subset\C^2$ such that for any $(z,\lambda) \in W$, there exists an irreducible $Z_0 \in \mathcal{Z}$ such that  $(z,\lambda)\in Z_0$ (if not, we have that for infinitely many $\lambda$, a set in $\mathcal{Z}$ has to pass through finitely many points and so $\mathcal{Z}$ is finite). 
	\begin{lemma}\label{transversality_quadratic}
		Let $Z_0 \in \mathcal{Z}$ be irreducible such that $(z_0,\lambda_0)\in Z_0$ where $z_0$ is a repelling periodic point of $f_{\lambda_0}$ of period $k$. Then  $Z_0 = \{(z,\lambda), \ f^k_\lambda(z)=z\}$.
	\end{lemma}	
	Take the lemma for granted and continue the proof. As repelling periodic points of a polynomial are Zariski dense in $\C$, this implies that generically in $(z,\lambda)$, one and only one $Z\in \mathcal{Z}$ passes through $(z,\lambda)$. In particular, $\dim(\mathcal{Z})= 1$ and the projection $\Pi$ is finite-to-1 onto its image. Up to taking a base change $\mathcal{B}\to \C$, we can assume that a given $Z_0\in \mathcal{Z}$ is an analytic graph hence, every $Z\in \mathcal{Z}$ is a graph. In particular, for every $(z,\lambda)$ in a Zariski dense open set of $\C \times\mathbb{B}$, there exists a unique $Z \in \mathcal{Z}$ such that $(z,\lambda)\in Z$. Fix two generic close $\lambda_0, \lambda_1$, outside the Mandelbrot set (in particular, periodic points can be followed holomorphically). Let us denote by $\Phi:\C \to \C $ the application that sends $z$ to the intersection of the leaf that contains $z$ at $\lambda_0$  with $\C  \times \{\lambda_1\}$.    
Then $f_{\lambda_1}(\Phi(z))=\Phi(f_{\lambda_0}(z))$ for every periodic point, hence for all points by Zariski density. In particular, we have a holomorphic (hence affine) conjugacy between $f_{\lambda_1}$ and $f_{\lambda_0}$. This is absurd, as $f_{\lambda_1}$ and $f_{\lambda_0}$ are holomorphically conjugate if and only if $\lambda_1=\lambda_0$.
\end{proof}

We now prove Lemma~\ref{transversality_quadratic}.
\begin{proof}[Proof of Lemma~\ref{transversality_quadratic}]
	Let us fix such $\lambda_0$ and $z_0$ a repelling periodic point of period $k$ of $f_{\lambda_0}$; we can follow that periodic point holomorphically by $\lambda\mapsto y(\lambda)$. Let $Z_0 \in \mathcal{Z}$, irreducible, such that  $(z_0,\lambda_0)\in Z_0$. Assume that the intersection $Z_0 \cap \{(y(\lambda), \lambda)\}$ is proper at $(z_0,\lambda_0)$. Up to reparametrizing, we can 
	follow locally a branch of $Z_0$ that contains  $(z_0,\lambda_0)$ through a graph $\lambda\mapsto z(\lambda)$ and our hypothesis means that for every $\lambda\neq \lambda_0$ in a neighborhood of $\lambda_0$, $y(\lambda)\neq z(\lambda)$. 
	
	By hypothesis, the Green function $\lambda\mapsto G(z(\lambda),\lambda)$ is harmonic and it admits a minimum at $(\lambda_0)$ so it is identically $0$. In particular, $(z(\lambda), \lambda)\in \mathcal{K}$ so the sequence $(\lambda\mapsto  f^n_\lambda(z(\lambda)))_n$ is normal. 
	In particular, for $\varepsilon>0$ small enough, we can find $K>1$ and $\delta>0$ small enough so that, for every $n$, $|f_\lambda^{kn}(z(\lambda))-y(\lambda)|< \varepsilon$ for $|\lambda-\lambda_0|<\delta$ (indeed, this is a normal sequence that is $0$ at $\lambda_0$) and $|f_\lambda^{k}(z)-y(\lambda)| \geq K |z-y(\lambda)|$ for $|z-y(\lambda)|<\varepsilon$. By iteration, we deduce $|f_\lambda^{kn}(z(\lambda))-y(\lambda)| \geq K^n |z(\lambda)-y(\lambda)|$ as $y(\lambda)$ is repelling, a contradiction. 
	
	In particular, by irreducibility, $Z_0\subset \{(z,\lambda), \ f^k_\lambda(z)=z\}$. Finally, as $\{(z,\lambda), \ f^k_\lambda(z)=z\}$ is irreducible (e.g. \cite{BuffLei}), we have the equality.
\end{proof}	
Now, Theorem~\ref{quadratic_case} then follows from the three above lemmas, since the assumption $Z\subset\mathcal{K}$ implies $Z$ has no vertical components as $\mathcal{K}\cap \C \times \{\lambda\}$ is compact in $ \C \times\{\lambda\}$.

\begin{remark}
	\normalfont In the particular case we are in, we can give a very short alternate argument of the end of the proof of Lemma~\ref{lemma4} using \cite{BuffLei}: above a Zariski generic parameter $\lambda_0$, for a Zariski dense subset of periodic points $z$ for $f_{\lambda_0}$, the set $Z \in \mathcal{Z}$ that passes through $(z,\lambda_0)$ is of the form $\{(z,\lambda), \ f^k_\lambda(z)=z\}$. But they are only finitely many such algebraic sets of degree $\leq D$. 
\end{remark}

\bibliographystyle{short}
\bibliography{biblio}
\end{document}